\newcounter{cnt1}
\newcounter{cnt2}
\newcommand{\blr}{\begin{list}{$(\roman{cnt1})$}
    {\usecounter{cnt1} \setlength{\topsep}{0pt}
        \setlength{\itemsep}{0pt}}}
\newcommand{\bla}{\begin{list}{$($\alph{cnt2}$)$}
    {\usecounter{cnt2} \setlength{\topsep}{0pt}
        \setlength{\itemsep}{0pt}}}
\newcommand{\el}{\end{list}}
\newtheorem{thm}{Theorem}
\newtheorem{Def}[thm]{Definition}
\newtheorem{prop}[thm]{Proposition}
\newtheorem{rem}[thm]{Remark}
\newcommand{\Rem}{\begin{rem} \rm}
\newcommand{\bdfn}{\begin{Def} \rm}
\newcommand{\edfn}{\end{Def}}
\begin{document}
\large
\title[Computing subdifferential limits]{Computing subdifferential limits of operators on Banach spaces}
\author[Rao]{T. S. S. R. K. Rao}
\address[T. S. S. R. K. Rao]
{Department of Mathematics\\
Shiv Nadar University \\
Delhi (NCR) \\ India,
\textit{E-mail~:}
\textit{srin@fulbrightmail.org}}
\subjclass[2000]{Primary 47 L 05, 46 B28, 46B25  }
 \keywords{
Subdifferential of the norm, preduality map, smooth points, spaces of operators, tensor product spaces
 } 
\begin{abstract}
Let $X,Y$ be real, infinite dimensional, Banach spaces. Let ${\mathcal L}(X,Y)$ be the space of bounded operators . An important aspect of understanding differentiability of the operator norm at $A \in{\mathcal L}(X,Y)$ is to estimate the limit $lim_{t \rightarrow 0^+} \frac{\|A+tB\|-\|A\|}{t}$, for $B \in {\mathcal L}(X,Y)$ (which always exists) using the values of $B$ on the state space $S_A= \{\tau \in {\mathcal L}(X,Y)^\ast: \tau(A)=\|A\|~,~\|\tau\|=1\}$. In this paper we give several examples of Banach spaces, including the $\ell^p$ spaces (for $1 <p<\infty$) where this can be either explicitly evaluated or an easier estimate is possible. We use the notion of norm-weak upper-semi-continuity (usc, for short) of the preduality map to achieve this. Our results also show that the operator subdifferential limit is related to the
corresponding subdifferential limit of the vectors in the range space, when $A^{\ast\ast}$ attains its norm.
\end{abstract}
\maketitle
\section { Introduction}
For a real Banach space $X$, let $X_1$ denote the closed unit ball and $\partial_e X_1$ denote the set of extreme points (when non-empty). For $ 0 \neq x \in X$, let $S_x$ denote the state space $\{x^\ast \in X^\ast_1: x^\ast(x) = \|x\|\}$. This is a weak$^\ast$-compact extreme convex subset of $X^\ast_1$ and it is well known that
$$\lim_{t \rightarrow 0^+}\frac{\|x+ty\|-\|x\|}{t} = sup\{x^\ast(y): x^\ast \in S_x\}.$$
Since $y$ acts as a weak$^\ast$-continuous affine function on $S_x$ by an application of the Krein-Milman theorem, one knows that the supremum above is equal to $x^\ast(y)$ for some
$x^\ast \in \partial_e X^\ast_1$ such that $x^\ast(x)=\|x\|$, see Proposition 2.24 in \cite{P}.
\vskip 1em
In this paper we are interested in studying the subdifferential limit as above (more precisely the righthand directional derivative) in ${\mathcal L}(X,Y)$ for infinite dimensional Banach spaces $X,Y$. See \cite{W}, \cite{CP}, \cite{TW}, \cite{MR} for differentiability of the norm in $C^\ast$-algebras and their weaker forms, also the recent work \cite{S}, \cite{S1}
for the structure of the state space in the case of operators on Hilbert spaces and other $C^\ast$-algebras. An analysis of the structure of the state space, in spaces of operators was also carried out in \cite{R2}.
\vskip 1em
For non-zero vectors $x^{\ast\ast} \in X^{\ast\ast}_1$, $y^\ast \in Y^\ast_1$, we call the functional $x^{\ast\ast}\otimes y^\ast$ defined for $T \in {\mathcal L}(X,Y)$, by $(x^{\ast\ast}\otimes y^\ast)(T) = x^{\ast\ast}(T^\ast(y^\ast))$, an elementary functional. Let ${\mathcal K}(X,Y)$ denote the space of compact operators. We note that ${\mathcal L}(X,Y)^\ast_1 = \overline{CO}\{x^{\ast\ast}\otimes y^\ast: x^{\ast\ast} \in X^{\ast\ast}_1~,~y^\ast \in Y^\ast_1\}$ where the closure of the convex hull is taken in the weak$^\ast$-topology and similar statement holds for ${\mathcal K}(X,Y)^\ast_1$ (this is because such functionals determine the norm of an operator).
\vskip 1em
We first note that for $A,B \in {\mathcal L}(X,Y)$ when both are compact, the subdifferential limit is attained at an elementary functional. We next show that in the general case, if the limit is attained at an elementary functional $x^{\ast\ast} \otimes y^\ast$, then $lim_{t \rightarrow 0^+} \frac {\|A^\ast(y^\ast)+tB^\ast(y^\ast)\|}{t}= x^{\ast\ast}(B^\ast(y^\ast))$, leading to the conclusion, if $A$ is a smooth operator, then the point-wise differential limit is attained at a unique functional. This is another illustration of how local behaviour can be extracted from global differentiability.
\vskip 1em
We consider a Banach space $X$ as canonically embedded in its bidual, $X^{\ast\ast}$. For $0 \neq x \in X$, let $S_x = \{x^\ast \in X^\ast_1: x^\ast(x) = \|x\|\}$ and $S^x = \{\tau \in X^{\ast\ast\ast}_1:\tau(x) = \|x\|\}$.
\vskip 1em
\emph{This notation is specific to bidual spaces and in all other case the state space has the same definition as in the early part of the Introduction}.
In what follows we are interested in the question:
\vskip 1em
 when is $lim_{t \rightarrow 0+}\frac{\|x+t\tau\|-\|x\|}{t}  = sup \{\tau(f): f \in S_x\}$ for all $\tau \in X^{\ast\ast}$?
We know that $lim_{t \rightarrow 0+}\frac{\|x+t\tau\|-\|x\|}{t} = sup \{\tau(f): f \in S^x\}$ for all $\tau \in X^{\ast\ast}$.
\vskip 1em
Thus we need equality of the suprema for all $\tau \in X^{\ast\ast}$. This is clearly seen to be equivalent to  $S_x$ being weak$^\ast$-dense in $S^x$.
\vskip 1em
We next recall that for $x^\ast \in X^\ast$ that attains its norm at a $x \in X_1$, the map $\rho(x^\ast) = S_x$ is called the preduality map. We note that points of $X^\ast$ that attain the norm, is norm dense in $X^\ast$.
\begin{Def}
We recall from \cite{GI} that $x^\ast$ is said to be a point of norm weak usc for $\rho$, if given a weak neighbourhood $V$ of $0$ in $X$, there is a $\delta >0$ such that for a norm attaining $x^\ast_1$ with $\|x^\ast -x^\ast_1\|< \delta$ implies $\rho(x^\ast_1) \subset V
+ \rho(x^\ast)$.
 \end{Def}
Thus by Lemma 2.2 in \cite{GI} the denseness of the state space is equivalent to $x$ being a point of  norm-weak upper-semi-continuity of the preduality map $\rho(x) = S_x$ on $X^{\ast\ast}$. More generally, the same conclusions apply for any point of norm-weak usc in a dual space $X^\ast$ (it is assumed that the functional attains its norm).
\vskip 1em
Since in many classical situations, like in $\ell^p$ spaces ($1 <p<\infty$) or when $X,Y$ are reflexive with the metric approximation property, one has the inclusion map on the space of compact operators, ${\mathcal K}(X,Y) \subset {\mathcal L}(X,Y)$ is the canonical embedding of ${\mathcal K}(X,Y)$ in its bidual ${\mathcal L}(X,Y)$ (see the discussion on page 247 in \cite{DU}), we take the
help of norm-weak usc operators in ${\mathcal K}(X,Y)$ or ${\mathcal L}(X,Y)$ to evaluate the limits. We refer to the monograph \cite{DU},  for basic theory of tensor products and structure of spaces of operators.
\vskip 1em
We show that when the projective tensor product space $X\otimes_{\pi}Y$ has the Radon-Nikodym property (RNP), then for $A \in {\mathcal L}(X,Y^\ast)$, $\lim_{t \rightarrow0^+} \frac{\|A+tB\|-\|A\|}{t}= sup \{x^{\ast\ast}(B^\ast(y^\ast)): x^{\ast\ast} \otimes y^\ast \in S_A\}$, when $A$ is a point of norm-weak usc for the preduality map on ${\mathcal L}(X,Y^\ast) = (X \otimes_{\pi}Y)^\ast$.
\vskip 1em
We also present results on the behaviour of norm-weak usc points in $M$-ideals (see \cite{HWW} Chapter I) and apply them in the context of the Calkin space ${\mathcal L}(X,Y)/{\mathcal K}(X,Y)$. We conclude by considering the case when subdifferentiable limits exists uniformly over the unit ball (the so called SSD points, see \cite{FP}).
\vskip 1em
Most results go through in the complex case also with real part of functional replacing the complex functional. We refer to the monograph \cite{DU} for the geometry of spaces of vector-valued functions and spaces of operators.
\section{Main results}
We recall that $\partial_e {\mathcal K}(X,Y)^\ast_1
= \{x^{\ast\ast}\otimes y^\ast: x^{\ast\ast} \in \partial_e X^{\ast\ast}_1~,~y^\ast \in \partial_e Y^\ast_1\}$, see Theorem VI.1.3 in \cite{HWW}. Thus for $A,B \in {\mathcal K}(X,Y)$, by our remarks in the Introduction, we get:
$$lim_{t \rightarrow 0^+}\frac{\|A+tB\|-\|A\|}{t} = (x^{\ast\ast}\otimes y^\ast)(B) = x^{\ast\ast}(y^\ast(B))$$
for some $x^{\ast\ast} \in \partial_e X^{\ast\ast}_1$, $y^\ast \in \partial_e Y^\ast_1$ such that
$x^{\ast\ast}(A^\ast(y^\ast))= \|A\|$.
\vskip 1em
We thus have, $\|A\|= \|A^\ast\| =\|A^{\ast\ast}\| \leq \|A^{\ast\ast}(x^{\ast\ast})\| \leq \|A^{\ast\ast}\|$ as well as $\|A^\ast\| \leq \|A^\ast(y^\ast)\| \leq \|A^\ast\|=\|A\|$. So that $\|A^{\ast\ast}(x^{\ast\ast})\| = \|A\|$ and $\|A^\ast(y^\ast)\|=\|A\|$.
\vskip 1em
\begin{prop}
	For operators $A,B \in {\mathcal L}(X,Y)$ suppose $$lim_{t \rightarrow 0^+}\frac{\|A-tB\|-\|A\|}{t} = x^{\ast\ast}(B^\ast(y^\ast))$$ for unit vectors $x^{\ast\ast} \in X^{\ast\ast}$, $y^\ast \in Y^\ast$ such that $x^{\ast\ast}(A^\ast(y^\ast))=\|A\|$. Then $lim_{t \rightarrow0^+}\frac {\|A^\ast(y^\ast)+tB^\ast(y^\ast)\|}{t} = x^{\ast\ast}(B^\ast)(y^\ast)$. If further $A$ is a smooth point, then the point-wise differential limit is attained at a unique functional of the state space $S_{A^\ast(y^\ast)}$. 	
\end{prop}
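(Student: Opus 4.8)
\emph{Proof sketch.} The plan is to shuttle between the operator $A$ and the vector $A^\ast(y^\ast)\in X^\ast$ by means of the elementary functionals on ${\mathcal L}(X,Y)$, reading both one-sided directional derivatives off the state-space formula from the Introduction.

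First I would record that $\|A^\ast(y^\ast)\|=\|A\|$: since $x^{\ast\ast}$ and $y^\ast$ are unit vectors, $\|A\|=x^{\ast\ast}(A^\ast(y^\ast))\le\|A^\ast(y^\ast)\|\le\|A^\ast\|\,\|y^\ast\|=\|A\|$. Viewing $x^{\ast\ast}$ as a norm-one element of $(X^\ast)^\ast=X^{\ast\ast}$, the equality $x^{\ast\ast}(A^\ast(y^\ast))=\|A^\ast(y^\ast)\|$ then says exactly that $x^{\ast\ast}\in S_{A^\ast(y^\ast)}$. Applying the directional-derivative formula recalled in the Introduction inside the Banach space $X^\ast$, at the point $A^\ast(y^\ast)$ and in the direction $B^\ast(y^\ast)$, gives
\[
\lim_{t\to 0^+}\frac{\|A^\ast(y^\ast)+tB^\ast(y^\ast)\|-\|A^\ast(y^\ast)\|}{t}=\sup\{f(B^\ast(y^\ast)):f\in S_{A^\ast(y^\ast)}\},
\]
so the membership $x^{\ast\ast}\in S_{A^\ast(y^\ast)}$ already yields that the left-hand side is $\ge x^{\ast\ast}(B^\ast(y^\ast))$.

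For the reverse inequality I would lift an arbitrary $f\in S_{A^\ast(y^\ast)}$ to the elementary functional $f\otimes y^\ast$ on ${\mathcal L}(X,Y)$. Since $\|f\otimes y^\ast\|\le\|f\|\,\|y^\ast\|\le 1$ and $(f\otimes y^\ast)(A)=f(A^\ast(y^\ast))=\|A^\ast(y^\ast)\|=\|A\|$, we get $f\otimes y^\ast\in S_A$. Writing the hypothesis, as in the Introduction, in the form $\lim_{t\to0^+}\frac{\|A+tB\|-\|A\|}{t}=x^{\ast\ast}(B^\ast(y^\ast))$, the Introduction's formula identifies the left-hand side with $\sup\{\tau(B):\tau\in S_A\}$; hence $f(B^\ast(y^\ast))=(f\otimes y^\ast)(B)\le\sup\{\tau(B):\tau\in S_A\}=x^{\ast\ast}(B^\ast(y^\ast))$. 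Taking the supremum over $f$ completes the first assertion. The one point that needs attention is keeping the sign of the direction $B$ on the operator side consistent with the sign of $B^\ast(y^\ast)$ on the vector side, so that a single supremum over $S_A$ controls both.

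For the last statement, take $f=x^{\ast\ast}$ in the preceding paragraph to see that $x^{\ast\ast}\otimes y^\ast\in S_A$; if $A$ is a smooth point, then $S_A$ is a singleton, and so $S_A=\{x^{\ast\ast}\otimes y^\ast\}$. Any $f\in S_{A^\ast(y^\ast)}$ then satisfies $f\otimes y^\ast\in S_A$, i.e. $f\otimes y^\ast=x^{\ast\ast}\otimes y^\ast$, and since $y^\ast\ne 0$ this forces $f=x^{\ast\ast}$: choosing $y_0\in Y$ with $y^\ast(y_0)=1$, every $x^\ast\in X^\ast$ has the form $T^\ast(y^\ast)$ for the operator $T\colon x\mapsto x^\ast(x)y_0$, so $f$ and $x^{\ast\ast}$ agree throughout $X^\ast$. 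Hence $S_{A^\ast(y^\ast)}=\{x^{\ast\ast}\}$; in particular $A^\ast(y^\ast)$ is a smooth point of $X^\ast$ and the point-wise differential limit is attained at the unique functional $x^{\ast\ast}$. The main obstacle in the argument --- modest, but the crux of this ``descent'' from operators to vectors --- is the correspondence $f\mapsto f\otimes y^\ast$ from $S_{A^\ast(y^\ast)}$ into $S_A$, together with its injectivity for fixed nonzero $y^\ast$.
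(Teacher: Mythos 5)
Your proof is correct, and it reaches the same two-sided estimate as the paper, but the upper bound is obtained by a genuinely different (dualized) mechanism. The paper works directly with difference quotients: it pairs $x^{\ast\ast}$ against $A^\ast(y^\ast)+tB^\ast(y^\ast)$ for the lower bound and then uses the pointwise norm inequality $\|A^\ast(y^\ast)+tB^\ast(y^\ast)\|-\|A^\ast(y^\ast)\|\leq\|A^\ast+tB^\ast\|-\|A^\ast\|$ (valid because $\|A^\ast(y^\ast)\|=\|A^\ast\|$) to squeeze the vector derivative between $x^{\ast\ast}(B^\ast(y^\ast))$ and the operator derivative; no state-space representation is needed for the upper estimate. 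You instead invoke the supremum formula on both $X^\ast$ and ${\mathcal L}(X,Y)$ and push each $f\in S_{A^\ast(y^\ast)}$ up to the elementary functional $f\otimes y^\ast\in S_A$. The two routes are of comparable length, but yours buys a little more: the inclusion $f\mapsto f\otimes y^\ast$ of $S_{A^\ast(y^\ast)}$ into $S_A$, together with your injectivity argument via rank-one operators, shows in the smooth case that $S_{A^\ast(y^\ast)}=\{x^{\ast\ast}\}$, i.e.\ that $A^\ast(y^\ast)$ is itself a smooth point of $X^\ast$ --- slightly stronger than the paper's uniqueness statement, whose proof of the smoothness clause is otherwise essentially identical to yours (including the step that $x^{\ast\ast}\otimes y^\ast=x_0^{\ast\ast}\otimes y^\ast$ with $\|y^\ast\|=1$ forces $x^{\ast\ast}=x_0^{\ast\ast}$, which you justify more explicitly). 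Your reading of the hypothesis with $+tB$ in place of the printed $-tB$ agrees with what the paper's own proof actually uses, so that sign adjustment is not a defect of your argument.
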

\begin{proof}
	Let $t>0$,  $tx^{\ast\ast}(B^\ast(y^\ast)) = x^{\ast\ast}(A^\ast(y^\ast)+tB^\ast(y^\ast))-x^{\ast\ast}(A^\ast(y^\ast)) \leq \|A^\ast(y^\ast)+tB(y^\ast)\|-\|A^\ast(y^\ast)\|= \|A^\ast(y^\ast)+tB(y^\ast)\|-\|A^\ast\|$. Now dividing by $t$ and taking the limits,
$$x^{\ast\ast}(B^\ast)(y^\ast) \leq lim_{t \rightarrow 0^+} \frac{\|A^\ast(y^\ast)+tB^\ast(y^\ast)\|-\|A^\ast(y^\ast)\|}{t}$$
$$ \leq lim_{t \rightarrow 0^+} \frac{\|A^\ast + t B^\ast\|-\|A^\ast\|}{t} = x^{\ast\ast}(B^\ast(y^\ast)).$$
 Thus equality holds every where, giving, $$lim_{t \rightarrow 0^+}\frac{\|A^\ast(y^\ast)+tB^\ast(y^\ast)\|}{t}= x^{\ast\ast}(B^\ast(y^\ast)).$$
\vskip 1em
Suppose $A$ is a smooth operator and suppose $x^{\ast\ast}(B^\ast(y^\ast))= x_0^{\ast\ast}(B^\ast(y^\ast))$ for some $x_0^{\ast\ast}$ such that $x_0^{\ast\ast}(A^\ast(y^\ast))=\|A^\ast(y^\ast)\|$. We have $(x^{\ast\ast}_0 \otimes y^\ast)(A)= x_0^{\ast\ast}(A^\ast(y^\ast))=\|A^\ast(y^\ast)\| \leq \|A^\ast\|= \|A\| = (x^{\ast\ast} \otimes y^\ast) (A)$. Since $A$ is a smooth point, we get $x^{\ast\ast} \otimes y^\ast = x^{\ast\ast}_0 \otimes y^\ast$ . Since $\|y^\ast\|=1 $, we get $x^{\ast\ast} = x^{\ast\ast}_0$.
\end{proof}
\begin{rem}
	The hypothesis clearly forces $A^\ast$ to attain its norm. We recall from  \cite{Z} that the set of operators whose adjoint attains its norm is a dense set in ${\mathcal L}(X,Y)$. In one of the interesting classes that we will be discussing below, ${\mathcal K}(X,Y)$ is a $M$-ideal in ${\mathcal L}(X,Y)$ and in this case by Proposition VI.4.8 (b), operators whose adjoint fails to attain the norm is a nowhere dense set.
\end{rem}
We next consider this problem in ${\mathcal L}(X,Y^\ast)$, which is the dual of the projective tensor product space $X \otimes_{\pi} Y$.
Since we are in a dual space, for $A \in {\mathcal L}(X,Y^\ast)$ that attains its norm, we use the notation $S_A$ and $S^A$ for state spaces in the predual and the dual. The hypothesis assumed in the following theorem is satisfied when $X,Y$ are reflexive spaces, see \cite{DU} Chapter VIII. We recall that in any von Neumann algebra, the identity or any unitary,  is a point of norm-weak u.s.c for the preduality map, as well as $I \in {\mathcal L}(X^\ast)$ for any dual space $X^{\ast}$.
\begin{thm} Suppose $X \otimes_{\pi}Y$ has the Radon-Nikodym property. Let $A \in {\mathcal L}(X,Y^\ast)$ be a point of norm-weak u.s.c for the preduality map on ${\mathcal L}(X,Y^\ast)$. Then for any $B \in {\mathcal L}(X,Y^\ast)$,
$lim_{t \rightarrow 0^+}\frac{\|A+tB\|-\|A\|}{t} =sup \{x^{\ast\ast}(B^\ast(y^\ast)):  x^{\ast\ast}\otimes y^\ast \in S_A\}$.
\end{thm}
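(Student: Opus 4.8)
The plan is to transfer the whole question to the predual $X\otimes_\pi Y$ and then exploit the RNP. In outline: (i) the hypothesis lets me replace the directional derivative by a supremum over the state space $S_A$ sitting inside $X\otimes_\pi Y$; (ii) the RNP cuts this supremum down to the extreme points of $S_A$; (iii) those extreme points turn out to be elementary tensors, which is exactly the right-hand side of the statement.

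For (i): since $\mathcal{L}(X,Y^\ast)=(X\otimes_\pi Y)^\ast$, the general formula recalled in the Introduction, applied to the Banach space $\mathcal{L}(X,Y^\ast)$ at the point $A$, gives
$$\lim_{t\to 0^+}\frac{\|A+tB\|-\|A\|}{t}=\sup\{\Phi(B):\Phi\in S^A\},$$
where $S^A=\{\Phi\in\mathcal{L}(X,Y^\ast)^\ast_1:\Phi(A)=\|A\|\}\subseteq(X\otimes_\pi Y)^{\ast\ast}$. Since $A$ is a norm-attaining point of norm-weak usc for the preduality map on the dual space $(X\otimes_\pi Y)^\ast$, Lemma 2.2 of \cite{GI} (recalled after the Definition) tells me that $S_A=\{z\in(X\otimes_\pi Y)_1:A(z)=\|A\|\}$ is weak$^\ast$-dense in $S^A$; as $B$ is weak$^\ast$-continuous on $(X\otimes_\pi Y)^{\ast\ast}$, the supremum above equals $\sup\{z(B):z\in S_A\}$.

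For (ii) and (iii): $S_A$ is a non-empty, norm-closed, bounded, convex set, and a face of $(X\otimes_\pi Y)_1$ (if $z=\frac12(u+v)\in S_A$ with $u,v\in(X\otimes_\pi Y)_1$, then $A(u)=A(v)=\|A\|$). Since $X\otimes_\pi Y$ has the RNP, $S_A$ is the norm-closed convex hull of its strongly exposed points, so $\partial_e S_A\neq\emptyset$ and $\sup\{z(B):z\in S_A\}=\sup\{z(B):z\in\partial_e S_A\}$, $B$ being a norm-continuous linear functional. I then claim every $z\in\partial_e S_A$ is an elementary tensor lying in $S_A$: writing $z=\sum_n\lambda_n x_n\otimes y_n$ as a convex series of norm-one simple tensors with $\sum_n\lambda_n=\|z\|_\pi=1$ (the projective norm is attained this way), one gets $\|A\|=A(z)=\sum_n\lambda_n(Ax_n)(y_n)\le\sum_n\lambda_n\|A\|=\|A\|$, hence $(Ax_n)(y_n)=\|A\|$, i.e. $x_n\otimes y_n\in S_A$, for every $n$ with $\lambda_n>0$; since $z$ is extreme in the convex set $S_A$ and is a convex combination of these points, $z=x_n\otimes y_n$ for each such $n$. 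For an elementary tensor one has $(x\otimes y)(B)=(Bx)(y)$, which in the notation of the statement is $x^{\ast\ast}(B^\ast(y^\ast))$ with $x^{\ast\ast}\in X^{\ast\ast}$, $y^\ast\in Y^{\ast\ast}$ the canonical images of $x\in X$, $y\in Y$. Hence $\sup\{z(B):z\in\partial_e S_A\}\le\sup\{x^{\ast\ast}(B^\ast(y^\ast)):x^{\ast\ast}\otimes y^\ast\in S_A\}$, and the reverse inequality is immediate because such elementary functionals already belong to $S_A$. Combining this with (i) finishes the proof.

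The hard part is step (iii), pinning down $\partial_e S_A$. The RNP is what makes step (ii) legitimate at all---a general closed bounded convex subset of $X\otimes_\pi Y$ need have no extreme points---and it is present precisely to supply the closed-convex-hull representation of $S_A$. The direct identification of the extreme points as simple tensors then hinges on the standard fact that the projective norm is attained by a convex series of norm-one elementary tensors; alternatively one may invoke the known structure of $\partial_e(X\otimes_\pi Y)_1$. A small separate thing to check is that working with $S_A$ in the predual rather than in $(X\otimes_\pi Y)^{\ast\ast}$ loses nothing, which is exactly what the norm-weak usc hypothesis buys in step (i).
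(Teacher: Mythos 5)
Your overall strategy coincides with the paper's: use the norm--weak usc hypothesis to replace $\sup_{S^A}$ by $\sup_{S_A}$ taken in the predual $X\otimes_\pi Y$, invoke Phelps's theorem via the RNP to reduce the supremum over the norm-closed bounded convex face $S_A$ to its strongly exposed (hence extreme) points, and then identify those extreme points as elementary tensors. Your steps (i) and (ii) are correct and are exactly what the paper does.

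The problem is in your step (iii). You write an extreme point $z$ of $S_A$ as $z=\sum_n\lambda_n\, x_n\otimes y_n$ with $\|x_n\|=\|y_n\|=1$ and $\sum_n\lambda_n=\|z\|_\pi=1$, asserting parenthetically that ``the projective norm is attained this way.'' That is not true in general: for an element of the completed projective tensor product, $\|z\|_\pi$ is only an \emph{infimum} of $\sum_n\|u_n\|\,\|v_n\|$ over series representations, and there exist tensors for which no representation achieves it (norm-attainment of tensors is a genuine restriction, studied in its own right); it is not clear that the RNP hypothesis rescues the claim. With only an $\epsilon$-optimal representation, $\sum_n\lambda_n\le 1+\epsilon$, your chain $\|A\|=\sum_n\lambda_n (Ax_n)(y_n)\le\sum_n\lambda_n\|A\|$ no longer forces $(Ax_n)(y_n)=\|A\|$ for each $n$, and the identification $z=x_n\otimes y_n$ collapses. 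The paper instead uses that $S_A$ is a face of $(X\otimes_\pi Y)_1$, so the points produced by Phelps's theorem are extreme in the whole unit ball, and then appeals to the extremal structure of that ball --- which is precisely the fallback you mention in one clause (``invoke the known structure of $\partial_e(X\otimes_\pi Y)_1$''). That is the route you need to make precise, and the robust way to do it is through strongly exposed/denting points rather than bare extreme points: a denting point of the closed convex hull of the norm-closed set of elementary tensors of norm at most one must lie in that set, and this is where the RNP is really earning its keep. As written, your primary argument for (iii) rests on a false general claim, so the proof has a genuine gap there even though the architecture is the right one.
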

\begin{proof} Let $B \in {\mathcal L}(X,Y^\ast)$. We have $lim_{t \rightarrow 0^+}\frac{\|A+tB\|-\|A\|}{t} =sup \{\tau(B):  \tau \in S^A\}$
and since $A$ is a point of norm-weak u.s.c for the preduality map, this limit is also equal to $sup\{\tau(B): \tau \in S_A\}$. We note that $S_A$
is a norm closed extreme convex subset of $(X \otimes_{\pi}Y)_1$. Since $X \otimes_{\pi} Y$ has the RNP, by a theorem of Phelps ( see \cite{DU}, Theorem 3, page 202), it is the norm closed convex hull of its strongly exposed and hence extreme points. As $S_A$ is an extremal set, these extreme points are also extreme in $X \otimes_{\pi}Y$ and therefore are elementary  functionals on ${\mathcal L}(X,Y^\ast)$. Since $B$ is norm continuous on $S_A$, we have $sup\{\tau(B): \tau \in S_A\} = sup \{x^{\ast\ast}(B^\ast(y^\ast)): x^{\ast\ast}\otimes y^\ast \in \partial_e S_A\}$.
\end{proof}
\vskip 1em
We recall from \cite{HWW} Chapter I, that a closed subspace $J \subset X$ is said to be a $M$-ideal, if there is a linear projection $P: X^\ast \rightarrow X^\ast$ such that $ker(P) = J^\bot$ and $\|P(x^\ast)\|+\|x^\ast-P(x^\ast)\| = \|x^\ast\|$, for all $x^\ast \in X^\ast$ (called $L$-projection). Our applications are motivated by the existence of large classes of Banach spaces $X,Y$, where ${\mathcal K}(X,Y)$ is a $M$-ideal in ${\mathcal L}(X,Y)$, which in some cases is the bidual of ${\mathcal K}(X,Y)$. See Chapter III and VI of \cite{HWW}. Under this hypothesis, we also study the Calkin space ${\mathcal L}(X,Y)/{\mathcal K}(X,Y)$.
\vskip 1em
\begin{thm}
Let $X,Y$ be Banach spaces such that ${\mathcal K}(X,Y)$ is a $M$-ideal in ${\mathcal L}(X,Y)$. Let $A \in {\mathcal L}(X,Y)$ be such that $d(A,{\mathcal K}(X,Y))< \|A\|$. Then for any $B \in {\mathcal L}(X,Y)$, $lim_{t \rightarrow 0^+}\frac{\|A+tB\|-\|A\|}{t} = x^{\ast\ast}(B^\ast(y^\ast))$ for some $(x^{\ast\ast} \otimes y^\ast) \in \partial_e S_A$ and hence $\lim_{t \rightarrow 0^+}\frac{\|A^\ast(y^\ast)+B^\ast(y^\ast\|}{t} = x^{\ast\ast}(B^\ast(y^\ast))$.
\end{thm}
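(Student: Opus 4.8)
The plan is to exploit the $M$-ideal structure to force the entire state space $S_A \subseteq \mathcal L(X,Y)^\ast_1$ into the $\mathcal K(X,Y)^\ast$-summand of the dual, so that its extreme points are automatically elementary functionals; the directional derivative, being the supremum over $S_A$ of the weak$^\ast$-continuous affine function induced by $B$, is then attained at one such extreme point, and the last assertion follows by the two-line sandwich argument already used in the Proposition above. Note first that $A\neq 0$ since $\|A\|>d(A,\mathcal K(X,Y))\geq 0$, and that by the basic formula from the Introduction (applied in the Banach space $\mathcal L(X,Y)$ with $x=A$) one has $\lim_{t\to 0^+}\frac{\|A+tB\|-\|A\|}{t}=\sup\{\tau(B):\tau\in S_A\}$, where $S_A=\{\tau\in\mathcal L(X,Y)^\ast_1:\tau(A)=\|A\|\}$ is nonempty, weak$^\ast$-compact and convex.

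The key step is the localization of $S_A$. Since $\mathcal K(X,Y)$ is an $M$-ideal in $\mathcal L(X,Y)$, there is an $L$-projection $P$ on $\mathcal L(X,Y)^\ast$ with $\ker P=\mathcal K(X,Y)^\bot$, giving the $\ell^1$-decomposition $\mathcal L(X,Y)^\ast=P(\mathcal L(X,Y)^\ast)\oplus_1\mathcal K(X,Y)^\bot$, where $P(\mathcal L(X,Y)^\ast)$ is isometric to $\mathcal K(X,Y)^\ast$ (each of its elements being the unique norm-preserving extension to $\mathcal L(X,Y)$ of its restriction to $\mathcal K(X,Y)$). For $\tau\in S_A$ write $\tau=\tau_1+\tau_2$ with $\tau_1=P(\tau)$, $\tau_2=\tau-P(\tau)$, so $\|\tau_1\|+\|\tau_2\|=\|\tau\|=1$. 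Since $\tau_2$ annihilates $\mathcal K(X,Y)$ one has $\tau_2(A)\leq\|\tau_2\|\,d(A,\mathcal K(X,Y))$, while $\tau_1(A)\leq\|\tau_1\|\,\|A\|$, whence
\[
\|A\|=\tau(A)=\tau_1(A)+\tau_2(A)\leq\|A\|-\|\tau_2\|\bigl(\|A\|-d(A,\mathcal K(X,Y))\bigr).
\]
As $\|A\|-d(A,\mathcal K(X,Y))>0$ this forces $\tau_2=0$, i.e. $S_A\subseteq\mathcal K(X,Y)^\ast_1$ (identified with $P(\mathcal L(X,Y)^\ast)_1$). Now $S_A$ is a face of $\mathcal L(X,Y)^\ast_1$, hence also a face of $\mathcal K(X,Y)^\ast_1$, so $\partial_e S_A\subseteq\partial_e\mathcal K(X,Y)^\ast_1=\{x^{\ast\ast}\otimes y^\ast:x^{\ast\ast}\in\partial_e X^{\ast\ast}_1,\ y^\ast\in\partial_e Y^\ast_1\}$ by Theorem VI.1.3 of \cite{HWW}. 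Since $B$, viewed in $\mathcal L(X,Y)^{\ast\ast}$, is weak$^\ast$-continuous and affine on the weak$^\ast$-compact convex set $S_A$, the supremum defining the limit is attained at some $\tau_0=x^{\ast\ast}\otimes y^\ast\in\partial_e S_A$ (Krein--Milman / Bauer maximum principle, cf. Proposition 2.24 in \cite{P}); then $x^{\ast\ast}(A^\ast(y^\ast))=\tau_0(A)=\|A\|$ and the limit equals $\tau_0(B)=x^{\ast\ast}(B^\ast(y^\ast))$.

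For the final clause, from $x^{\ast\ast}(A^\ast(y^\ast))=\|A\|$ with $\|x^{\ast\ast}\|=\|y^\ast\|=1$ one gets $\|A^\ast(y^\ast)\|=\|A^\ast\|=\|A\|$, and then for $t>0$,
\[
t\,x^{\ast\ast}(B^\ast(y^\ast))=x^{\ast\ast}(A^\ast(y^\ast)+tB^\ast(y^\ast))-\|A^\ast(y^\ast)\|\leq\|A^\ast(y^\ast)+tB^\ast(y^\ast)\|-\|A^\ast(y^\ast)\|\leq\|A+tB\|-\|A\|,
\]
so dividing by $t$ and letting $t\to 0^+$ squeezes $\lim_{t\to 0^+}\frac{\|A^\ast(y^\ast)+tB^\ast(y^\ast)\|-\|A^\ast(y^\ast)\|}{t}=x^{\ast\ast}(B^\ast(y^\ast))$, exactly as in the Proposition above. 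I expect the only delicate point to be the second step: correctly invoking the $M$-ideal structure, namely the $\ell^1$-decomposition of $\mathcal L(X,Y)^\ast$, the identification of the range of the $L$-projection with $\mathcal K(X,Y)^\ast$ (so that its extreme points really are the elementary functionals), and the control of the $\mathcal K(X,Y)^\bot$-component of any state of $A$ by the quotient distance $d(A,\mathcal K(X,Y))$; everything else is the standard Krein--Milman argument and the elementary estimate above.
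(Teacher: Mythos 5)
Your proof is correct and follows essentially the same route as the paper: the $\ell^1$-decomposition $\mathcal L(X,Y)^\ast=\mathcal K(X,Y)^\ast\oplus_1\mathcal K(X,Y)^\bot$ coming from the $M$-ideal hypothesis, the condition $d(A,\mathcal K(X,Y))<\|A\|$ to force the norming functionals into the $\mathcal K(X,Y)^\ast$-summand, the identification $\partial_e\mathcal K(X,Y)^\ast_1=\{x^{\ast\ast}\otimes y^\ast\}$, Krein--Milman attainment, and the sandwich argument of Proposition 2. The only (harmless) difference is that you localize the entire state space $S_A$ by a quantitative estimate on the $\mathcal K(X,Y)^\bot$-component, whereas the paper argues directly that each extreme point of $S_A$, being extreme in the $\ell^1$-sum's unit ball, must lie in $\partial_e\mathcal K(X,Y)^\ast_1$ rather than $\partial_e(\mathcal K(X,Y)^\bot_1)$.
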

\begin{proof}
  We use the identification of $\partial_e{\mathcal K}(X,Y)^\ast_1$ given earlier. Since ${\mathcal K}(X,Y)$ is a $M$-ideal in ${\mathcal L}(X,Y)$, we have $${\mathcal L}(X,Y)^\ast = {\mathcal K}(X,Y)^\ast \bigoplus_1 {\mathcal K}(X,Y)^\bot$$ ($\ell^1$-direct sum).
  Let $\pi: {\mathcal L}(X,Y) \rightarrow {\mathcal L}(X,Y)/{\mathcal K}(X,Y)$ be the quotient map. Let $\tau \in  \partial_e {\mathcal L}(X,Y)^\ast_1$ be such that $\|A\|= \tau(A)$. We recall that these are precisely extreme points of $S_A$. Because of the $\ell^1$-decomposition of the dual space, $\tau \in \partial_e{\mathcal K}(X,Y)^\ast_1 \cup \partial_e ({\mathcal K}(X,Y)^\bot_1)$. Since $\|\pi(A)\|< \|A\|$, we get
  $\tau \in \partial_e {\mathcal K}(X,Y)^\ast_1$.
  \vskip 1em
  We already know $lim_{ t \rightarrow 0^+} \frac{\|A+tB\|-\|A\|}{t} = \tau(A)$ for some $\tau \in \partial_e S_A$. Therefore the evaluation of the derivatives  follows from the description of extreme points and Proposition 2.
\end{proof}
In order to give further examples, in the next set of results the behaviour of norm-weak usc points of the preduality map with respect to $M$-ideals will be studied.
\vskip 1em
For a closed subspace $J \subset X$, we will be using the canonical identification of $J^{\bot\bot}$ with $J^{\ast\ast}$ as well as $(X^\ast/J^\bot)^\ast$. Similarly $(X/J)^{\ast\ast}$ is identified as $X^{\ast\ast}/J^{\bot\bot}$. If $\pi: X \rightarrow X/J$ is the quotient map, then $\pi^{\ast\ast}$ is identified with the quotient map $\pi: X^{\ast\ast} \rightarrow X^{\ast\ast}/J^{\bot\bot}$ without any notational change. We also have $J^{\bot\bot} \cap X = J$.  We implicitly use the canonical contractive projection $Q: X^{\ast\ast\ast} \rightarrow X^{\ast}$ defined by $Q(x^{\ast\ast\ast})|X=x^\ast$ and $ker (Q) = X^\bot$. Similar objects are used with respect to $J$.
\begin{prop}
Let $J \subset X$ be a $M$-ideal and $x \in J$ be a point of norm-weak usc for the preduality map on $J^{\bot\bot}$. Then $x$ is a point of norm-weak usc for the preduality map on $X^{\ast\ast}$. The converse statement is always true.
\end{prop}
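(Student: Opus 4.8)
The plan is to exploit the $L$/$M$-structure attached to an $M$-ideal and to transport state spaces and weak neighbourhoods between $X^\ast$ and $J^\ast$. Let $P$ be the $L$-projection on $X^\ast$ with $\ker P = J^\bot$, set $N = P(X^\ast)$, so that $X^\ast = N \oplus_1 J^\bot$; the restriction map $r\colon X^\ast \to J^\ast$ is a weak-to-weak continuous metric surjection carrying $N$ isometrically onto $J^\ast$, and I write $\Phi\colon J^\ast \to N \subseteq X^\ast$ for the inverse of $r|_N$, so that $r\circ\Phi = \mathrm{id}_{J^\ast}$ and $\Phi\circ r = P$, with $\Phi$ isometric and weak-to-weak continuous. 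Dually, $P^\ast\colon X^{\ast\ast}\to J^{\bot\bot}$ is an $M$-projection with $\ker P^\ast = N^\bot$, so $X^{\ast\ast} = J^{\bot\bot}\oplus_\infty N^\bot$, and $P^\ast$ fixes $J^{\bot\bot}$, in particular $P^\ast x = x$. I write $\rho_X$, $\rho_J$ for the preduality maps on $X^{\ast\ast}$ and on $J^{\bot\bot}\cong J^{\ast\ast}$, with values in subsets of $X^\ast_1$ and of $(J^\ast)_1$ respectively.

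The main step is a transfer lemma for state spaces. By Hahn--Banach, if $\xi \in J^{\bot\bot}$ attains its norm on $(J^\ast)_1$ it also attains its norm on $X^\ast_1$; and since $\xi$ annihilates $J^\bot$ one checks that $r(\rho_X(\xi)) = \rho_J(\xi)$ --- this part needs no $M$-ideal hypothesis, and is all the converse will require. Using the $M$-ideal, I claim moreover that $\rho_X(\xi) = \Phi(\rho_J(\xi)) \subseteq N_1$: writing $x^\ast = a + b \in N \oplus_1 J^\bot$ for a point of $\rho_X(\xi)$ (and $\xi\neq 0$), the chain $\|\xi\| = \langle\xi,x^\ast\rangle = \langle\xi,a\rangle \le \|\xi\|\,\|a\| \le \|\xi\|\,\|x^\ast\| \le \|\xi\|$ forces $\|a\| = \|x^\ast\| = 1$, hence $b = 0$. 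The same $\ell^1$-versus-$\ell^\infty$ computation also shows that for $\mu = \eta + \zeta \in J^{\bot\bot}\oplus_\infty N^\bot$ with $\|\eta\| > \|\zeta\|$ one has $\rho_X(\mu) = \rho_X(\eta)$, that is, the dominant $M$-summand carries the entire state space.

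For the direction ``norm-weak usc on $J^{\bot\bot}$ implies norm-weak usc on $X^{\ast\ast}$'', I would fix a weak neighbourhood $V$ of $0$ in $X^\ast$, put $V' = \Phi^{-1}(V)$ (a weak neighbourhood of $0$ in $J^\ast$, as $\Phi$ is weak-to-weak continuous), let $\delta'>0$ be the radius the hypothesis provides for $V'$, and set $\delta = \min\{\delta', \|x\|/2\}$. Given a norm attaining $x_1^{\ast\ast}\in X^{\ast\ast}$ with $\|x - x_1^{\ast\ast}\| < \delta$, decompose $x_1^{\ast\ast} = \eta + \zeta$ along $J^{\bot\bot}\oplus_\infty N^\bot$; since $x$ has trivial $N^\bot$-component we get $\|x-\eta\| < \delta$ and $\|\zeta\| < \delta$, whence $\|\eta\| > \|x\|/2 > \|\zeta\|$. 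The transfer lemma then gives $\rho_X(x_1^{\ast\ast}) = \rho_X(\eta) = \Phi(\rho_J(\eta))$ and makes $\eta$ norm attaining in $J^{\bot\bot}$ with $\|x-\eta\|_{J^{\ast\ast}} < \delta'$ (the norms on $J^{\bot\bot}$ and $X^{\ast\ast}$ agreeing); the hypothesis yields $\rho_J(\eta)\subseteq V' + \rho_J(x)$, and applying $\Phi$ (and $\rho_X(x) = \Phi(\rho_J(x))$) gives $\rho_X(x_1^{\ast\ast}) \subseteq \Phi(V') + \rho_X(x) \subseteq V + \rho_X(x)$.

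For the converse, given a basic weak neighbourhood $V = \{\phi\in J^\ast : |\langle\phi,\xi_i\rangle| < \varepsilon,\ i\le n\}$ of $0$ in $J^\ast$ (with $\xi_i \in J^{\ast\ast}$), the set $W = \{x^\ast\in X^\ast : |\langle x^\ast, r^\ast\xi_i\rangle| < \varepsilon,\ i\le n\}$ is a weak neighbourhood of $0$ in $X^\ast$ with $r(W)\subseteq V$. If $\delta>0$ is what the norm-weak usc of $\rho_X$ at $x$ provides for $W$, then for norm attaining $\eta \in J^{\bot\bot}$ with $\|x-\eta\|_{J^{\ast\ast}} < \delta$, Hahn--Banach makes $\eta$ norm attaining in $X^{\ast\ast}$ at the same distance from $x$, so $\rho_X(\eta)\subseteq W + \rho_X(x)$; applying $r$ and using $r(\rho_X(\eta)) = \rho_J(\eta)$ and $r(\rho_X(x)) = \rho_J(x)$ gives $\rho_J(\eta)\subseteq r(W) + \rho_J(x)\subseteq V + \rho_J(x)$. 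I expect the transfer lemma to be the only real obstacle: one has to see that the $L$-decomposition $X^\ast = N\oplus_1 J^\bot$ pins the state space of a norm attaining element of $J^{\bot\bot}$ inside $N$ and identifies it with the corresponding state space in $J^\ast$, and that a point near $x\in J$ has $M$-projection with strictly dominant $J^{\bot\bot}$-part; after that everything is routine manipulation with the weak-to-weak continuous maps $r$ and $\Phi$.
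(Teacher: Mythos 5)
Your proof is correct and follows essentially the same route as the paper: the forward direction rests on the $L$-decomposition $X^\ast = J^\bot \oplus_1 P(X^\ast)$ and the induced $\ell^\infty$-decomposition $X^{\ast\ast}=J^{\bot\bot}\oplus_\infty N^\bot$, which is exactly the reduction the paper invokes (and leaves as ``easy to see''), while your converse, run directly from the $\varepsilon$--$\delta$ definition via the restriction map, is an equivalent reformulation of the paper's net/denseness argument and likewise uses no $M$-ideal hypothesis. Your ``transfer lemma'' simply writes out in full the state-space computation that the paper's proof only sketches.
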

\begin{proof}
Let $P: X^\ast \rightarrow X^\ast$ be a linear projection such that $\|x^\ast\|=\|P(x^\ast)\|+\|x^\ast - P(x^\ast)\|$ for all $x^\ast \in X^\ast$ .
For $x^\ast \in X^\ast_1$, if $x^\ast(x) = \|x\|$ then $x^\ast(x) = P(x^\ast)(x)+(x^\ast(x)-P(x^\ast)(x)) = \|x\|$ and $1 = \|x^\ast\|= \|P(x^\ast)\|+\|x^\ast-P(x^\ast)\|$ gives, $\|P(x^\ast)\|= P(x^\ast)(x)$ and $\|x^\ast - P(x^\ast)\|= x^\ast(x)-P(x^\ast)(x)$. This type of relation between the state space vectors will be repeatedly used here.
\vskip 1em
Suppose $J \subset X$ is a $M$-ideal. By duality we have, $X^{\ast\ast}= J^{\bot\bot} \bigoplus_{\infty} (J^\ast)^\bot$ ($\ell^\infty$-direct sum), the conclusion will follow from the more general, easy to see (using the remarks in the preamble, before the Proposition), statement, if $Z = M \bigoplus_1 N$ for closed subspaces $M,N$ of $Z$, then in $Z^\ast = M^\bot \bigoplus_\infty N^\bot = M^\ast \bigoplus_\infty N^\ast$, if $m^\ast \in M^\ast$ is a point of norm-weak usc of the preduality map on $M^\ast$, then $m^\ast$ is also a point of norm-weak usc for the preduality map on $X^\ast$.
\vskip 1em
To see the converse, we write out the state spaces, without using the cumbersome notation of state space with respect to a particular dual.
Let $J \subset X$ be a closed subspace and $x \in X$ be a point of norm-weak usc for the preduality map on $X^{\ast\ast}$. Suppose $x \in J$ and $x^{\ast\ast\ast} \in J^{\ast\ast\ast}_1 $ be such that $x^{\ast\ast\ast}(x) = \|x\|$. By hypothesis, there is a net $\{x^\ast_{\alpha}\}_{\alpha \in \Delta} \subset X^\ast_1$ such that $x^\ast_{\alpha}(x) = \|x\|$ for all $\alpha$ and $x^\ast_{\alpha} \rightarrow x^{\ast\ast\ast}$
in the weak$^\ast$-topology of $X^{\ast}$. Since $x \in J$, clearly the restriction map puts the functionals in the state space with respect to $J^\ast$ and we also have weak$^\ast$ convergence. Hence the conclusion follows.
\end{proof}
\begin{rem}
	We recall that in a $C^\ast$ algebra, $M$-ideals are precisely closed two-sided ideals (see \cite{HWW} Chapter I) and see Chapter V of \cite{HWW} for a description of $M$-ideals in Banach algebras and in ${\mathcal L}(X)$.
\end{rem}
We note that the identity $e$ of a Banach algebra $A$ (and in particular in ${\mathcal L}(X)$) is an extreme point of the unit ball and for the same reason, is an extreme point of $A^{\ast\ast}_1$. If $x_0^\ast \in X^\ast_1$ attains its norm at $x_0\in X_1$ and $x_0^\ast$ is a smooth point, then $x_0 \in \partial_e X^{\ast\ast}_1$.  For a Banach space $X$, an extreme point $x \in X_1$ is said to be a weak$^\ast$-extreme point, if $x$ is also an extreme point of $X^{\ast\ast}_1$. The following proposition illustrates the limitations of some of our assumptions (see Theorem 5). If $J \subset {\mathcal L}(X)$ is a proper $M$-ideal, even if $J$ is not an algebraic ideal, we still get the conclusion $d(I,J) = 1$.
\begin{prop}
Let $x \in X_1$ be a weak$^\ast$-extreme point and let $J \subset X$ be a proper $M$-ideal. Then $d(x,J)=1$. In particular, $\pi(x)$ is a weak$^\ast$-extreme point of $X/J$.
\end{prop}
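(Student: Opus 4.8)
The plan is to exploit the $M$-structure that $J$ induces on $X^{\ast\ast}$. Since $J$ is an $M$-ideal there is an $L$-projection $P$ on $X^\ast$ with $\ker P = J^\bot$; dualizing, $P^\ast$ is an $M$-projection on $X^{\ast\ast}$ with range $(\ker P)^\bot = J^{\bot\bot}$, giving the $\ell^\infty$-direct sum $X^{\ast\ast} = J^{\bot\bot} \oplus_\infty W$, where $W := \ker P^\ast = (J^\ast)^\bot$. First I would record that $W \neq \{0\}$: otherwise $X \subseteq X^{\ast\ast} = J^{\bot\bot}$, and since $J^{\bot\bot} \cap X = J$ this would force $J = X$, contrary to properness.

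Next, decompose $x = a + b$ with $a = P^\ast x \in J^{\bot\bot}$ and $b = (I - P^\ast)x \in W$. The key identification is $d(x,J) = \|b\|$: for $j \in J \subseteq J^{\bot\bot}$ one has $\|x - j\| \geq \|(I - P^\ast)(x - j)\| = \|b\|$, so $d(x,J) \geq \|b\|$, while the reverse inequality follows from the canonical isometry $(X/J)^{\ast\ast} = X^{\ast\ast}/J^{\bot\bot} \cong W$ (induced by the $M$-decomposition, sending $z + J^{\bot\bot}$ to its $W$-component), under which $\pi(x)$ corresponds precisely to $b$, so that $d(x,J) = \|\pi(x)\| = \|b\|$.

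Finally I would feed in weak$^\ast$-extremality. Since $x$ is a weak$^\ast$-extreme point, $\|x\| = 1$ and $x \in \partial_e X^{\ast\ast}_1$; but $X^{\ast\ast}_1 = J^{\bot\bot}_1 \oplus_\infty W_1$, and the extreme points of an $\ell^\infty$-sum of two balls are exactly the pairs of extreme points of the two factor balls, so $b \in \partial_e W_1$. As $W \neq \{0\}$, extreme points of $W_1$ lie on the unit sphere, whence $\|b\| = 1$, i.e.\ $d(x,J) = 1$. Transporting $\pi(x)$ back through the isometry $(X/J)^{\ast\ast} \cong W$ shows $\pi(x)$ corresponds to $b \in \partial_e W_1$, so $\pi(x) \in \partial_e (X/J)^{\ast\ast}_1$; since $\pi(x) \in X/J$ and $\|\pi(x)\| = 1$, it is a weak$^\ast$-extreme point of $X/J$. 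I expect the only delicate point to be pinning down the isometry $(X/J)^{\ast\ast} \cong W$ compatibly with both the quotient map and with which summand records the distance to $J$; everything else is routine bookkeeping with the $L$- and $M$-projections together with the two elementary facts about extreme points of $\ell^\infty$-sums and of nonzero balls.
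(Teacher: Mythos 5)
Your proof is correct and follows essentially the same route as the paper: decompose $X^{\ast\ast}=J^{\bot\bot}\oplus_\infty W$ via the $M$-structure, identify $d(x,J)$ with the norm of the $W$-component of $x$, and use that extreme points of an $\ell^\infty$-sum have both coordinates extreme (hence of norm one) to conclude. You are in fact slightly more careful than the paper in spelling out why $W\neq\{0\}$ (from properness of $J$) and why $d(x,J)=d(x,J^{\bot\bot})=\|b\|$, both of which the paper leaves implicit.
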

\begin{proof}
We note that $d(x,J) = d(x,J^{\bot\bot})$. Since $J$ is a $M$-ideal in $X$, we have $X^{\ast\ast}= J^{\bot\bot} \bigoplus_{\infty} (J^\bot)^\ast$. If $P$ is the $M$-projection on $X^{\ast\ast}$ with $ker(P) = J^{\bot\bot}$, then since $x$ is an extreme point of $X^{\ast\ast}_1$, we have $\|P(x)\|=1=\|x-P(x)\|$. Now $d(x,J) = d(x,J^{\bot\bot})= \|P(x)\| =1$. The decomposition above also shows that $\pi(x)$ is an extreme point of $(X/J)^{\ast\ast}= X^{\ast\ast}/J^{\bot\bot}$.
\end{proof}
A particularly interesting situation occurs, when $X$ is a $M$-ideal under the canonical embedding in its bidual $X^{\ast\ast}$. In this case the canonical projection $Q$ on $X^{\ast\ast\ast}$ defined above is a $L$-projection. See Chapter III of \cite{HWW} for examples and Chapter VI when spaces of compact operators ${\mathcal K}(X,Y)$ is a $M$-ideal, when ${\mathcal L}(X^{\ast\ast},Y^{\ast\ast})$ is identified as its bidual. When $X$ is a $M$-ideal in its bidual, $X^\ast$ has the RNP. See \cite{HWW} Theorem III.3.1.
\vskip 1em
It is easy to see that unitaries in a $C^\ast$-algebra $A$, remain as points of norm-weak usc for the preduality map in all duals of higher even order of $A$ (as they are still unitaries in the bigger space). The following phenomenon exhibits weak$^\ast$-dense set of points of norm-weak usc for the preduality map on $X^{\ast\ast}$. We do not know how to determine the largeness of points of norm-weak usc  for the preduality map on $X^{\ast\ast}$ in the category sense, when $X$ is a $M$-ideal in its bidual.
\begin{prop}
	
 Let $X$ be a non-reflexive space that is a $M$-ideal in its bidual.
a) Any $0 \neq x \in X$ is a point of norm-weak usc for the preduality map on $X^{\ast\ast}$ and it continues to be so in all higher duals of even order of $X$.
\vskip 1em
b) If $\tau \in X^{\ast\ast}$ is such that $\tau$ attains its norm on $X^\ast$ and $d(\tau,X)< \|\tau\|$, then $\tau$ is a point of norm-weak usc for the preduality map on $X^{\ast\ast}$. This behaviour continues in all the duals of even order of $X$.
	
\end{prop}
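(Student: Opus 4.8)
The plan is to exploit the $L$-decomposition $X^{\ast\ast\ast}=X^\ast\bigoplus_1 X^\bot$ provided by the hypothesis that $X$ is an $M$-ideal in $X^{\ast\ast}$ (equivalently, the canonical projection $Q\colon X^{\ast\ast\ast}\to X^\ast$ with $\ker Q=X^\bot$ is an $L$-projection), and to show that the ``large'' state space of the point in question actually collapses into $X^\ast$, hence coincides with the ``small'' one; by the equivalence of \cite{GI} (Lemma 2.2) recalled above, this yields both parts at once. I would also remark at the outset that (b) contains (a): for $0\neq x\in X$ one has $d(x,X)=0<\|x\|$ and $x$ attains its norm on $X^\ast$ by Hahn--Banach, so it is enough to prove (b) and then read off (a) as the case $\tau=x$.

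For (b) at the level of $X^{\ast\ast}$ I would argue as follows. Given $\Lambda\in S^\tau$, write $\Lambda=Q\Lambda+(\Lambda-Q\Lambda)$ with $Q\Lambda\in X^\ast$, $\Lambda-Q\Lambda\in X^\bot$ and $\|Q\Lambda\|+\|\Lambda-Q\Lambda\|=\|\Lambda\|=1$. Since $\Lambda-Q\Lambda$ kills $X$, for every $x\in X$ we have $(\Lambda-Q\Lambda)(\tau)=(\Lambda-Q\Lambda)(\tau-x)$, so $|(\Lambda-Q\Lambda)(\tau)|\le\|\Lambda-Q\Lambda\|\,d(\tau,X)$. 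Hence
$$\|\tau\|=\Lambda(\tau)=Q\Lambda(\tau)+(\Lambda-Q\Lambda)(\tau)\le\|Q\Lambda\|\,\|\tau\|+\|\Lambda-Q\Lambda\|\,d(\tau,X),$$
and since $\|Q\Lambda\|+\|\Lambda-Q\Lambda\|=1$ while $d(\tau,X)<\|\tau\|$, this forces $\|\Lambda-Q\Lambda\|=0$, i.e. $\Lambda=Q\Lambda\in X^\ast$. Thus $S^\tau\subseteq X^\ast$, so $S^\tau=S_\tau$; the denseness is trivial and $\tau$ is a point of norm--weak usc for the preduality map on $X^{\ast\ast}$. (The norm--attainment assumption is used only to guarantee $S_\tau\neq\emptyset$, so the statement is non-vacuous.) Taking $\tau=x$ gives (a) for $X^{\ast\ast}$.

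For the higher even duals the plan has two steps. First I would show that $X$ remains an $M$-ideal in every even dual $X^{(2n)}$, by induction on $n$: the bidual closure $X^{\bot\bot}=(X^\bot)^\bot$ of $X$ inside $X^{(2n+2)}$ is an $M$-ideal (indeed an $M$-summand) there whenever $X$ is an $M$-ideal in $X^{(2n)}$; under the canonical identification $X^{\bot\bot}\cong X^{\ast\ast}$ the copy of $X$ in it is again (canonical) $X\subseteq X^{\ast\ast}$, an $M$-ideal by hypothesis; and transitivity of the $M$-ideal property (see \cite{HWW}, Chapter~I) closes the induction. This gives, for each $n$, an $L$-splitting $X^{(2n+1)}=N_n\bigoplus_1 X^\bot$ in which $X^\bot$ is the annihilator of $X$ and restriction to $X$ is an isometric isomorphism $N_n\to X^\ast$. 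Secondly, I would rerun the estimate above at level $X^{(2n)}$ to place the entire state space $S_\tau^{(X^{(2n+1)})}$ inside $N_n$, and then use the isometry $N_n\cong X^\ast$ — which is compatible with the canonical inclusion $X^{(2n-1)}\hookrightarrow X^{(2n+1)}$, this inclusion intertwining the two restriction maps onto $X^\ast$ — to conclude that $S_\tau^{(X^{(2n-1)})}$ and $S_\tau^{(X^{(2n+1)})}$ are carried bijectively onto the same subset of $X^\ast_1$ and hence coincide as subsets of $X^{(2n+1)}$. This is exactly the norm--weak usc property on $X^{(2n)}$, and $\tau=x$ again recovers (a).

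The step I expect to demand the most care is this last bookkeeping: one must track the $L$-summand $N_n$ and the annihilator $X^\bot$ along the chain $X^\ast\hookrightarrow X^{\ast\ast\ast}\hookrightarrow X^{(5)}\hookrightarrow\cdots$ and verify that the ``restriction to $X$'' identifications are genuinely compatible across consecutive even duals — equivalently, that the relevant $L$-projection on $X^{(2n+1)}$ is the canonical map $\Lambda\mapsto\Lambda|_X$. By contrast, the base case is just the short displayed estimate together with the unpacking of ``$X$ is an $M$-ideal in $X^{\ast\ast}$''.
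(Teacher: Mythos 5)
Your proposal is correct and follows essentially the same route as the paper: the whole content is that the $L$-decomposition $X^{(2n+1)}=X^\ast\bigoplus_1 X^\bot$ forces every norming functional of $\tau$ to have vanishing $X^\bot$-component (your displayed estimate is exactly the quantitative form of the argument the paper borrows from its Theorem 5), so the large state space collapses onto the small one and density is trivial. The only departures are organizational: you subsume (a) into (b) as the case $d(x,X)=0$, and for the higher even duals you prove the persistence of the $M$-ideal property by induction and rerun the estimate — correctly flagging that one must check the $L$-projection on $X^{(2n+1)}$ is the canonical restriction-to-$X$ map — where the paper simply cites \cite{R} together with its Proposition on transfer of norm-weak usc points across $M$-ideals.
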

\begin{proof} (a):  Since the projection $Q$ is a $L$-projection, $X^{\ast\ast\ast} = X^\ast \bigoplus_1 X^\bot$, we see that $x^{\ast\ast\ast}$ is the unique extension of its restriction to $X$ and the kernel annihilates $X$.
Hence the conclusion follows.
\vskip 1em
It was shown in \cite{R} that under the hypothesis $X$ continues to be a $M$-ideal of all the higher order even duals of $X$ (all in canonical embeddings). Hence the conclusion follows from  Proposition 6.
\vskip 1em
(b): Since $X^{\ast\ast\ast} = X^\ast \bigoplus_1 X^\bot$ for the canonical projection, the conclusion follows from arguments similar to the ones given during the proof of Theorem 5.
\vskip 1 em
Since we have $X$ is still a $M$-ideal in the fourth dual $X^{(IV)}$ the conclusion follows.
\end{proof}
Next set of results address the quotient space questions and are applicable in particular to the Calkin space ${\mathcal L}(X,Y)/{\mathcal K}(X,Y)$.
\begin{thm}
Let $x$ be a unit vector which is a point of norm-weak usc for the preduality map on $X^{\ast\ast}$. Let $J \subset X$ be a $M$-ideal. Suppose $d(x,J) >0$ (and thus $d(x,J^{\bot\bot}) = d(x,J)>0$) . Then $\pi(x)$ is a point of norm-weak usc for the preduality map on $(X/J)^{\ast\ast}= X^{\ast\ast}/J^{\bot\bot}$.
\end{thm}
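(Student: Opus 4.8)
The plan is to reduce the conclusion to a norm-weak usc statement about a point of $X^{\ast\ast}$ itself — the $(X/J)^{\ast\ast}$-component of $x$ — and then transfer the hypothesis across the $M$-ideal splitting, in the spirit of the proof of Proposition~6. Since $J$ is an $M$-ideal, $J^\bot$ is an $L$-summand of $X^\ast$; let $P$ be the $L$-projection with $\ker P=J^\bot$ and $V=P(X^\ast)\cong J^\ast$, so $X^\ast=J^\bot\oplus_1 V$. Dualizing, $X^{\ast\ast}=J^{\bot\bot}\oplus_\infty(X/J)^{\ast\ast}$, where $(X/J)^{\ast\ast}=(J^\bot)^\ast$ is realized as $V^\bot$ and $R:=I-P^\ast$ is the $M$-projection onto it, so that $R(z)$ corresponds to $\pi^{\ast\ast}(z)$ under $(X/J)^{\ast\ast}=X^{\ast\ast}/J^{\bot\bot}$. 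One further dualization gives $X^{\ast\ast\ast}=(J^{\bot\bot})^\bot\oplus_1\bigl((X/J)^{\ast\ast}\bigr)^\bot$ with $(J^{\bot\bot})^\bot\cong(X/J)^{\ast\ast\ast}$, the $L$-projection onto $(J^{\bot\bot})^\bot$ being $I-P^{\ast\ast}$, which restricts on $X^\ast$ to the projection $I-P$ onto $J^\bot=(X/J)^\ast$; and one checks that $\sigma\bigl((X/J)^{\ast\ast\ast},(X/J)^{\ast\ast}\bigr)$ agrees with the restriction of $\sigma(X^{\ast\ast\ast},X^{\ast\ast})$ to $(J^{\bot\bot})^\bot$.

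Next I would reduce the conclusion. Because $d(x,J)>0$, $Rx=\pi^{\ast\ast}(x)\neq 0$ with $\|Rx\|=d(x,J^{\bot\bot})=d(x,J)$. A short computation — any functional norming $Rx$, whether in $X^\ast$ or in $X^{\ast\ast\ast}$, is forced to lie in $J^\bot$, respectively in $(J^{\bot\bot})^\bot$ — identifies $S_{\pi(x)}$ with $S_{Rx}=\{\phi\in J^\bot_1:\phi(x)=d(x,J)\}$ and $S^{\pi(x)}$ with $S^{Rx}=\{F\in\bigl((J^{\bot\bot})^\bot\bigr)_1:F(x)=d(x,J)\}$, the relevant weak$^\ast$ topologies matching by the last sentence of the previous paragraph. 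So, by the denseness criterion recalled above (Lemma~2.2 in \cite{GI}), it suffices to show that $Rx$ is a point of norm-weak usc for the preduality map on $X^{\ast\ast}$, i.e.\ that $S_{Rx}$ is $\sigma(X^{\ast\ast\ast},X^{\ast\ast})$-dense in $S^{Rx}$.

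For this, fix $F\in S^{Rx}$. I would first match $F$ with the norming functionals of $x$ — this is immediate when $d(x,J)=\|x\|$, since then $S^{Rx}\subseteq S^x$ — and use the norm-weak usc of $x$ on $X^{\ast\ast}$ to obtain a net $x^\ast_\alpha\in S_x$ with $x^\ast_\alpha\to F$ in $\sigma(X^{\ast\ast\ast},X^{\ast\ast})$. Put $\phi_\alpha=(I-P)x^\ast_\alpha\in J^\bot$, so $\|\phi_\alpha\|\le 1$. Since $I-P^{\ast\ast}$ is weak$^\ast$-continuous, restricts to $I-P$ on $X^\ast$, and fixes $(J^{\bot\bot})^\bot$, $\phi_\alpha=(I-P^{\ast\ast})x^\ast_\alpha\to(I-P^{\ast\ast})F=F$; likewise $Px^\ast_\alpha=P^{\ast\ast}x^\ast_\alpha\to P^{\ast\ast}F=0$, so $(Px^\ast_\alpha)(x)\to 0$ and $\phi_\alpha(x)=x^\ast_\alpha(x)-(Px^\ast_\alpha)(x)\to\|x\|$. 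Combining the $\ell^1$-identity $\|x^\ast_\alpha\|=\|Px^\ast_\alpha\|+\|\phi_\alpha\|\le 1$ with $x^\ast_\alpha(x)=\|x\|$ and $|\phi_\alpha(x)|\le\|\phi_\alpha\|\,d(x,J)$ forces equality throughout the triangle inequality, whence $\phi_\alpha(x)=\|\phi_\alpha\|\,d(x,J)$; since $\phi_\alpha(x)\to\|x\|>0$, the normalized functionals $\psi_\alpha:=\phi_\alpha/\|\phi_\alpha\|$ are eventually defined, lie in $S_{Rx}$, and converge to $F$. Thus $F\in\overline{S_{Rx}}^{\,w^\ast}$, and as $F$ was arbitrary this completes the argument.

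The hard part will be the opening move of the last paragraph: pinning down exactly how $S^{Rx}$ sits relative to the norming functionals of $x$ to which the hypothesis actually applies, and then verifying that the projected, renormalized net genuinely lands inside $S_{Rx}$ and not merely close to it. This is where the rigidity of the $\ell^1/\ell^\infty$ $M$-ideal decomposition and the size hypothesis $d(x,J)>0$ are indispensable; the chain of equalities above is clean precisely when $d(x,J)=\|x\|$ — which already covers the motivating instances (identities and unitaries, and weak$^\ast$-extreme points, for which $d(x,J)=\|x\|$ holds automatically) — and the range $d(x,J)<\|x\|$, where that chain degenerates, has to be handled with additional care.
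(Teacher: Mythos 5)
Your proposal follows essentially the same route as the paper's proof: decompose $X^{\ast}=J^{\bot}\oplus_1 J^{\ast}$ and its higher duals via the $L$-projection onto $J^{\bot}$, take a norming net for $x$ supplied by the usc hypothesis, push it into $J^{\bot}$ with that projection, and renormalize so that it lands in $S_{\pi(x)}$ and converges weak$^\ast$ to the given element of $S^{\pi(x)}$. The computation $\phi_\alpha(x)=\|\phi_\alpha\|\,d(x,J)$ that you extract from the $\ell^1$-identity is exactly the paper's step ``$P(f_\alpha)(x)=\|P(f_\alpha)\|$,'' and your normalized net $\phi_\alpha/\|\phi_\alpha\|$ plays the role of the paper's $P(f_\alpha)/(\|P(f_\alpha)\|\,d(x,J))$.

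The issue you flag in your last paragraph is genuine, but it is not a defect of your write-up relative to the paper: the paper's proof has the identical gap and does not acknowledge it. Concretely, when $0<d(x,J)<\|x\|=1$ the chain $1=Pf(x)+(I-P)f(x)\le\|Pf\|\,d(x,J)+\|(I-P)f\|\le\|f\|=1$ forces $\|Pf\|(1-d(x,J))=0$, i.e.\ $Pf=0$ for \emph{every} $f\in S_x$ (here $P$ is the projection onto $J^{\bot}$); the projected net is identically zero and cannot be renormalized, and moreover no $\tau\in S^{\pi(x)}$ satisfies $\tau(x)=\|x\|$, so the usc hypothesis cannot be applied to it at all --- yet the paper does exactly that when it writes ``by hypothesis, there exists a net $\{f_\alpha\}\subset\partial_x$ with $f_\alpha\to\tau$'' for $\tau\in\partial_{\pi(x)}'$. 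Both arguments are therefore complete only in the case $d(x,J)=\|x\|$ (which covers the motivating examples you list); you have reproduced the paper's argument where it works and, unlike the paper, honestly isolated the case that remains open.
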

\vskip 1em
\begin{proof} Since $X^{\ast} = J^\bot \bigoplus_1 J^\ast$, it is easy to see that $\partial_x = CO(\partial_x \cap J^\ast \cup \partial_x \cap J^\bot)$. Similarly $\partial_{x}' = CO( \partial_x' \cap (J^\ast)^{\bot\bot} \cup \partial_x' \cap J^{\bot\bot\bot})$.
\vskip 1em
By hypothesis, we have, $\partial_x = \{f \in X^{\ast}_1:f(x) = 1\}$ is dense in $\partial_x'= \{\tau \in X^{\ast\ast\ast}_1: \tau(x)= 1\}$, in the weak$^\ast$-topology of $X^{\ast\ast\ast}$. We need to show that $\partial_{\pi(x)} = \{f \in J^\bot_1: f(x) = d(x,J)\} $ is weak$^\ast$-dense in $\partial_{\pi(x)}' = \{\tau \in J^{\bot\bot\bot}_1 : \tau(x) = d(x,J)\}$.
\vskip 1em
Let $P$ be the $L$-projection with range $J^\bot$ and so $P^{\ast\ast}$ is the $L$-projection with range $J^{\bot\bot\bot}$.
\vskip 1em
Let $\tau \in \partial_{\pi(x)}'$. Since $\tau(x) = d(x,J)$, we get, $P^{\ast\ast}(\tau)(x) = \tau(x) = d(x,J)$.
\vskip 1em
Also by hypothesis, there exists a net $\{f_{\alpha}\}_{\alpha \in \Delta} \subset \partial_x$ such that $f_{\alpha} \rightarrow \tau$ in the weak$^\ast$-topology of $X^{\ast\ast\ast}$. Since $P^{\ast\ast}$ is an extension of $P$, we get $P(f_{\alpha}) \rightarrow P^{\ast\ast}(\tau) = \tau$ in the weak$^\ast$-topology of $X^{\ast\ast\ast}$.
Now  using the fact that $P(f_{\alpha})(x) = \|P(f_{\alpha})\|$ for all $\alpha \in \Delta$, we get $\|P(f_{\alpha})\| \rightarrow  d(x,J)>0$.
Therefore the net $$\{\frac{P(f_{\alpha})}{\|P(f_{\alpha})\|d(x,J)}\}_{\alpha \in\Delta} \subset \partial_{\pi(x )}$$ and it converges in the weak$^\ast$-topology of $X^{\ast\ast\ast}$ to
 $\tau$.
 \end{proof}
\vskip 1em
As an application we have the following situation for quotient spaces. We denote by $\pi$ the quotient maps on different spaces, interpreted correctly by the context.
\vskip 1em
\begin{rem}Let $J \subset M \subset X$ be closed subspaces and suppose $M$ is a $M$-ideal in $X$. For $x \in M$, $ x\notin J$, if $\pi(x)$ is a point of norm-weak usc for the preduality map on $M^{\ast\ast}/J^{\bot\bot}$, then it is also a point of norm-weak usc for the preduality map on $X^{\ast\ast}/J^{\bot\bot}$. This follows from the observation, $M/J$ is a $M$-ideal in $X/J$ (see \cite{HWW} Proposition I.1.17). These results are of particular interest in $C^\ast$-algebras with several non-trivial closed two sided ideals. See Theorem 1.3 in \cite{S}. Another application of these ideas comes from Proposition 8.
	
\end{rem}
\begin{rem} Successful application of the norm-weak usc of the preduality map depends on the predual
(see \cite{GI}). In this paper we have assumed the RNP of the space $X$, to ensure uniqueness of $X$ as the predual $X^{\ast}$.
\end{rem}
\vskip 2em
\begin{rem}
  Let $X,Y$ be Banach spaces such that $X^{\ast\ast}$ or $Y^{\ast\ast}$ has the compact metric approximation property. It can be shown that there is a contractive projection $P: {\mathcal L}(X,Y^\ast)^\ast \rightarrow {\mathcal L}(X,Y^\ast)^\ast$ such that $ker P = {\mathcal K}(X,Y^\ast)^\bot$. See the remarks on page 334 of \cite{HWW}. It is easy to verify that this projection is identity on functionals of the form $x^{\ast\ast}\otimes y^{\ast\ast}$. It therefore follows that ${\mathcal K}(X,Y^\ast) \subset {\mathcal L}(X,Y^\ast) \subset {\mathcal K}(X,Y^\ast)^{\ast\ast}$, where the inclusion is the canonical embedding. Suppose the projective tensor product space $X \otimes_{\pi} Y$ has the RNP (hence it is the unique predual of ${\mathcal L}(X,Y^\ast))$. Let $A \in {\mathcal K}(X,Y^\ast)$ be point of norm-weak usc in the bidual, we do not know if it is also a point of norm-weak usc in the intermediate space${\mathcal L}(X,Y^\ast)$?
  A difficulty being, that ${\mathcal L}(X,Y^\ast)$ need not be a weak$^\ast$-closed subspace of ${\mathcal K}(X,Y)^{\ast\ast}$.
\end{rem}
We recall from \cite{FS} that a smooth point $x \in X$ is said to be a very smooth point, if $x$ is also a smooth point of $X^{\ast\ast}$. See \cite{R1} for an analysis of very smooth points in spaces of operators.
\begin{prop}
	Let $X,Y$ be Banach spaces such that $X^{\ast\ast}$ or $Y^{\ast\ast}$ has the compact metric approximation property. Suppose $A \in {\mathcal K}(X,Y^\ast)$ is a very smooth point. Then for any $B \in {\mathcal L}(X,Y^\ast)$, there exists $x^{\ast\ast} \in \partial_e X^{\ast\ast}_1$ and $y^{\ast\ast} \in \partial_e Y^{\ast\ast}_1$ such that $x^{\ast\ast}(A^\ast(y^{\ast\ast}))=\|A\|$ and $$lim_{ t \rightarrow 0^+} \frac{\|A+tB\|-\|A\|}{t} = x^{\ast\ast}(B^\ast(y^{\ast\ast})).$$
\end{prop}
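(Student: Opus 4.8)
The plan is to reduce everything to the behaviour of the norm at a smooth point. Write ${\mathcal K}={\mathcal K}(X,Y^\ast)$ and ${\mathcal L}={\mathcal L}(X,Y^\ast)=(X\otimes_{\pi}Y)^\ast$. By the compact metric approximation property of $X^{\ast\ast}$ or $Y^{\ast\ast}$ together with the preceding remark, we have canonical (isometric) inclusions ${\mathcal K}\subset{\mathcal L}\subset{\mathcal K}^{\ast\ast}$. Since $A$ is a very smooth point of ${\mathcal K}$, it is in particular a smooth point of ${\mathcal K}^{\ast\ast}$, and the first step is to transfer this smoothness down to the intermediate space ${\mathcal L}$.

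For this I would record the elementary observation: if $Z\subset W\subset Z^{\ast\ast}$ isometrically via the canonical embeddings and $z\in Z$ is a smooth point of $Z^{\ast\ast}$, then $z$ is a smooth point of $W$. Indeed, if $w_1^\ast,w_2^\ast\in W^\ast_1$ both satisfy $w_i^\ast(z)=\|z\|$, then by Hahn--Banach (using $W\subset Z^{\ast\ast}$) each $w_i^\ast$ extends to some $\Phi_i\in Z^{\ast\ast\ast}_1$ with $\Phi_i(z)=\|z\|$; smoothness of $z$ in $Z^{\ast\ast}$ forces $\Phi_1=\Phi_2$, and restricting back to $W$ gives $w_1^\ast=w_2^\ast$. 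Applying this with $Z={\mathcal K}$, $W={\mathcal L}$ and $z=A$, we conclude that $A$ is a smooth point of ${\mathcal L}$; let $\phi$ denote the unique functional in ${\mathcal L}^\ast_1$ with $\phi(A)=\|A\|$. Since the right-hand directional derivative always exists and equals the supremum of the state functionals over $S_A$, it follows that $\lim_{t \rightarrow 0^+}\frac{\|A+tB\|-\|A\|}{t}=\phi(B)$ for every $B\in{\mathcal L}$.

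It remains to exhibit $\phi$ as an elementary functional. Here I use that $A$ is also a smooth point of ${\mathcal K}$ itself, so the state space of $A$ in ${\mathcal K}^\ast_1$ is a singleton $\{\tau_0\}$; being the only point of an extremal subset of ${\mathcal K}^\ast_1$, $\tau_0$ is an extreme point of ${\mathcal K}^\ast_1$. By Theorem VI.1.3 in \cite{HWW} applied to ${\mathcal K}(X,Y^\ast)$, and since $(Y^\ast)^\ast=Y^{\ast\ast}$, we may write $\tau_0=x^{\ast\ast}\otimes y^{\ast\ast}$ with $x^{\ast\ast}\in\partial_e X^{\ast\ast}_1$ and $y^{\ast\ast}\in\partial_e Y^{\ast\ast}_1$, and $\tau_0(A)=x^{\ast\ast}(A^\ast(y^{\ast\ast}))=\|A\|$. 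Now the elementary functional $x^{\ast\ast}\otimes y^{\ast\ast}$ also defines a functional on all of ${\mathcal L}$, of norm at most $\|x^{\ast\ast}\|\,\|y^{\ast\ast}\|=1$, and it still takes the value $\|A\|$ at $A\in{\mathcal K}\subset{\mathcal L}$; hence it lies in the (singleton) state space of $A$ in ${\mathcal L}^\ast_1$, which forces $\phi=x^{\ast\ast}\otimes y^{\ast\ast}$. Substituting into the previous paragraph,
$$\lim_{t \rightarrow 0^+}\frac{\|A+tB\|-\|A\|}{t}=(x^{\ast\ast}\otimes y^{\ast\ast})(B)=x^{\ast\ast}(B^\ast(y^{\ast\ast})),$$
which is the asserted identity, with $x^{\ast\ast}$ and $y^{\ast\ast}$ in fact independent of $B$.

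The step I expect to be most delicate is the first one: extracting from the compact metric approximation property that ${\mathcal L}(X,Y^\ast)$ genuinely embeds canonically and isometrically into ${\mathcal K}(X,Y^\ast)^{\ast\ast}$, so that the notion of a smooth point of the bidual is meaningful for the operator norm on ${\mathcal L}$, together with the transfer-of-smoothness observation. It is precisely here that the strengthened hypothesis enters: the remark preceding the proposition warns that norm-weak usc need not pass to the intermediate space ${\mathcal L}$, whereas for a \emph{very smooth} point the relevant state spaces are singletons and the transfer is automatic. After those two points, the remainder is a routine assembly of the description of $\partial_e{\mathcal K}(X,Y^\ast)^\ast_1$, the extremal-subset argument pinning $\tau_0$ down, and the fact that at a smooth point the directional derivative of the norm is just evaluation at the unique supporting functional.
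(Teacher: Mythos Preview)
Your proof is correct and follows essentially the same route as the paper: use the canonical inclusions ${\mathcal K}(X,Y^\ast)\subset{\mathcal L}(X,Y^\ast)\subset{\mathcal K}(X,Y^\ast)^{\ast\ast}$ from Remark~13 to transfer very smoothness of $A$ down to smoothness in ${\mathcal L}(X,Y^\ast)$, then identify the unique supporting functional via the extreme-point description $\partial_e{\mathcal K}(X,Y^\ast)^\ast_1=\{x^{\ast\ast}\otimes y^{\ast\ast}\}$. You have simply spelled out in full the Hahn--Banach sandwich argument and the extremal-set reasoning that the paper leaves implicit.
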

\begin{proof} By Remark 13, the hypothesis implies ${\mathcal K}(X,Y^\ast) \subset {\mathcal L}(X,Y^\ast) \subset {\mathcal K}(X,Y^\ast)^{\ast\ast}$ in the canonical embedding. Since $A$ is a very smooth point, we get that $A$ is a smooth point of ${\mathcal L}(X,Y^\ast)$. Since $A$ is also a smooth point of ${\mathcal K}(X,Y^\ast)$ by the structure of the extreme points, we get $S_A = \{x^{\ast\ast}_0 \otimes y^{\ast\ast}_0\}$ for some $x^{\ast\ast} \in \partial_e X^{\ast\ast}_1$ and $y^{\ast\ast} \in \partial_e Y^{\ast\ast}_1$. Hence the conclusion follows.
	
\end{proof}
We conclude the paper by studying a notion where subdifferential limits exists uniformly. We recall from \cite{FP} that a unit vector $x$ is said to be a point of strong subdifferentiability (SSD for short) if $lim_{t \rightarrow 0^+}\frac{\|x+ty\|-\|x\|}{t}$ exists uniformly for $y \in X_1$. To connect with the preceding analysis, we note from \cite{FP} that,
$x \in X$ is a SSD point if and only if it is a SSD point in $X^{\ast\ast}$ and a SSD point is a point of norm-norm continuity of the duality map (see \cite{FP} Theorem 1.2).
Thus it is easy to see that a unit vector, $x$ is a SSD point, if and only if for $\epsilon>0$ there is a $\delta>0$ such that whenever $\|z\|=1$, $\|z-x\|< \delta$ implies $S_z \subset S_x+ \epsilon X^\ast_1$. For a later use, we note that $S_x + \epsilon X^\ast_1$ is a weak$^\ast$-compact convex set, so the verify the inclusion one only needs $\partial_e S_z \subset S_x+ \epsilon X^\ast_1$. It is easy to see that as in Remark 13, assuming compact metric approximation property on $X^\ast$ or $Y$, we have that any SSD point of ${\mathcal K}(X,Y)$ is a SSD point of ${\mathcal L}(X,Y)$.
\vskip 1em
The next proposition we give a different proof to show that strong subdifferentiability of the norm passes through $M$-ideals. It was shown in \cite{CP} (Corollary 1) that subdifferentiability passes from a $C^\ast$-subalgebra to a $C^\ast$-algebra. As $M$-ideals in $C^\ast$-algebras are precisely closed two-sided ideals (see Theorem V.4.4 in \cite{HWW}), we get a new geometric proof of Corollary 1, in the case of ideals, using the state-space method. It can also be deduced from Proposition 2.1 in \cite{FP}. We do not know how to compare subdifferential limits, in the case of $C^\ast$-subalgebras?
\begin{prop}
  Let $J \subset X$ be a $M$-ideal. If a unit vector $j \in J$ is a SSD point of $J$, then it is a SSD point of $X$.
\end{prop}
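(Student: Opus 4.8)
The plan is to pass to the bidual, where the $M$-ideal becomes an $M$-summand, and then to read the conclusion off from the state spaces. Recall from the quoted Theorem 1.2 of \cite{FP} that a unit vector is an SSD point of a Banach space exactly when it is an SSD point of its bidual; hence it suffices to prove that if $j$ is an SSD point of $J^{\ast\ast}$ then $j$ is an SSD point of $X^{\ast\ast}$. Since $J$ is an $M$-ideal in $X$, by duality $X^{\ast\ast} = J^{\bot\bot} \bigoplus_{\infty} (J^\ast)^\bot$ (an $\ell^\infty$-direct sum) and $J^{\bot\bot}$ is canonically and isometrically identified with $J^{\ast\ast}$; so, viewing $j \in J^{\bot\bot}$, the hypothesis says that $j$ is a unit vector which is an SSD point of the $M$-summand $J^{\bot\bot}$ of $X^{\ast\ast}$. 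The whole statement therefore reduces to the elementary fact: \emph{if $Z = A \bigoplus_{\infty} B$ and $a \in A$ is a unit vector which is an SSD point of $A$, then $a$ is an SSD point of $Z$.}

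To prove this, write $Z^\ast = A^\ast \bigoplus_1 B^\ast$, and for $u \in A$, $z \in Z$ put $S^A_u = \{a^\ast \in A^\ast_1 : a^\ast(u) = 1\}$ and $S_z = \{\phi \in Z^\ast_1 : \phi(z) = 1\}$. Fix $\epsilon > 0$ and, using that $a$ is an SSD point of $A$, choose $\delta_1 > 0$ with the property that $u \in A$, $\|u\| = 1$, $\|u - a\| < \delta_1$ implies $S^A_u \subset S^A_a + \epsilon A^\ast_1$; set $\delta = \min\{\delta_1, 1/2\}$. Let $z = (u,v) \in Z$ with $\|z\| = 1$ and $\|z - a\| < \delta$. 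Then $\|u - a\|_A < \delta$ and $\|v\|_B < \delta$, so $\|v\|_B < \delta \le 1 - \delta < \|u\|_A \le 1$ and $\|u\|_A = 1$. The key computation is $S_z = S^A_u \times \{0\}$: if $(a^\ast, b^\ast) \in S_z$ then $a^\ast(u) + b^\ast(v) = 1$ with $\|a^\ast\| + \|b^\ast\| = 1$, and since $a^\ast(u) \le \|a^\ast\|$ and $b^\ast(v) \le \delta \|b^\ast\|$ one gets $1 \le \|a^\ast\| + \delta\|b^\ast\| = 1 - (1-\delta)\|b^\ast\|$, forcing $b^\ast = 0$ and $a^\ast \in S^A_u$; the same computation at $a$ gives $S_a = S^A_a \times \{0\}$. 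Hence $S_z = S^A_u \times \{0\} \subset (S^A_a + \epsilon A^\ast_1) \times \{0\} \subset S^A_a \times \{0\} + \epsilon (A^\ast_1 \times \{0\}) \subset S_a + \epsilon Z^\ast_1$, using that $A^\ast_1 \times \{0\}$ sits inside $Z^\ast_1$. By the characterization of SSD points recalled just before the statement, $a$ is an SSD point of $Z$; applying it with $Z = X^{\ast\ast}$, $A = J^{\bot\bot}$, $B = (J^\ast)^\bot$ and $a = j$ shows $j$ is an SSD point of $X^{\ast\ast}$, and hence of $X$.

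I expect the only delicate point to be the identification $S_z = S^A_u \times \{0\}$ for perturbations $z$ of $a$: this is where the $\ell^\infty$/$\ell^1$ duality of the $M$-summand is used decisively, and it is the bidual incarnation of the $L$-decomposition $X^\ast = J^\bot \bigoplus_1 J^\ast$ underlying the $M$-ideal. One can also argue without the bidual: given a unit vector $w$ close to $j$ and $x^\ast \in S_w$ in $X^\ast$, decompose $x^\ast$ along $X^\ast = P(X^\ast) \bigoplus_1 J^\bot$ with $P$ the $L$-projection, note that the $J^\bot$-component has norm at most $\|w-j\|$ while $P(x^\ast)$ restricts to a near-state of $j$ on $J$, and feed this into the SSD hypothesis for $j$ in $J$; that route, however, first requires reformulating strong subdifferentiability in terms of near-states (via a weak$^\ast$ Hahn--Banach separation and the uniformity of the right directional derivative of the norm), which is its single genuinely technical ingredient.
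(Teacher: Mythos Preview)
Your proof is correct and follows essentially the same route as the paper: pass to the bidual via the Franchetti--Pay\'a characterization, use the $\ell^\infty$-decomposition $X^{\ast\ast}=J^{\bot\bot}\bigoplus_\infty (J^\ast)^\bot$ to reduce to the $M$-summand case, and then verify the state-space inclusion $S_z\subset S_a+\epsilon Z^\ast_1$ by showing the $B$-component of any state of $z$ must vanish when $\|z-a\|$ is small. The only cosmetic difference is that the paper checks this on $\partial_e S_z$ (using that extreme points of the $\ell^1$-sum ball lie in one summand), whereas you compute $S_z=S^A_u\times\{0\}$ directly; the underlying inequality is the same.
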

\begin{proof}
  Since $J \subset X$ is a $M$-ideal, we have, $X^{\ast\ast} = J^{\bot\bot} \bigoplus_{\infty} (J^\ast)^\bot$. If $j$ is a SSD point of $J$, we have from \cite{FP} that it is a SSD point of $J^{\bot\bot}=J^{\ast\ast}$. Thus it is enough to show that this property passes from $M$-summands. Then $j$ will be a SSD point of $X^{\ast\ast}$ and hence in $X$.
  \vskip 1em
  Assume without loss of generality, $X = J \bigoplus_{\infty} J'$. We first note that $S_j$ is the same set in both $J$ and $X$. We will use the state space definition of a SSD point. Let $\epsilon >0$ and let $\delta$ be as in the definition of $j$ being a SSD point in $J$. Let $\delta'= min \{1,\delta\}$. Let $z \in X$, $\|z\|=1 $, $z =j_1+j_2$ for $j_1 \in J$, $j_2 \in J'$ , so that $max\{\|j_1\|,\|j_2\|\}=1$  and $\|z-j\|= max\{\|j_1-j\|,\|j_2\|\}< \delta'$. As before we only need to consider extreme points of the state space $S_z$. Let $\phi \in \partial_e S_z$. If $\phi \in J^\ast = (J')^\bot$, then $\phi(z)= \phi(j_1)=1$, hence $ \|j_1\|=1$. As $\|j_1-j\|< \delta$ by hypothesis, $ \phi \in S_j + \epsilon J^\ast_1 \subset S_j + \epsilon X^\ast_1$.
  \vskip 1em
  Suppose $\phi \in J^\bot$. Then $\phi(z)= \phi(j_2)=1$ and hence $\|j_2\|=1$. This contradicts, $\|j_2\|< \delta'$. Thus $j$ is a SSD point of $X$.
\end{proof}


\begin{thebibliography}{99}
 \bibitem{CP} M. D. Contreras, R.  Pay$\acute{a}$ and W.  Werner {\em $C^\ast$-algebras that are I-rings}, J. Math. Anal. Appl. 198 (1996) 227–-236.
\bibitem{DU}  J. Diestel and J. J. Uhl, {\em Vector measures}, Mathematical Surveys 15, American Mathematical Society, Providence, RI, 1977.
\bibitem{FP} C. Franchetti and R.  Pay$\acute{a}$, {\em Banach spaces with strongly subdifferentiable norm}, Boll. Un. Mat. Ital. B  7 (1993) 45–-70.
 \bibitem{GI} G. Godefroy and V. Indumathi, {\em Norm-to-weak upper semi-continuity of the duality and pre-duality mappings},
Set-Valued Anal. 10 (2002) 317--330.
 \bibitem{HWW} P. Harmand, D. Werner and W. Werner, {\em $M$-ideals in Banach spaces and Banach algebras}, Springer LNM 1547, Berlin 1993.
 \bibitem{MR} Edwards C. Martin and G. T.  Ruttimann, {\em Smoothness properties of the unit ball in a $JB^\ast$-triple}. Bull. London Math. Soc. 28 (1996) 156--160.
 \bibitem{P} R. R. Phelps {\em Convex functions, monotone operators and differentiability}, Lecture Notes in Mathematics, 1364. Springer-Verlag, Berlin, 1989. x + 115 pp.
 \bibitem{R} T. S. S. R. K. Rao, {\em On the geometry of higher duals of a Banach space}, Ill. J. Math. 45 (2002) 1389--1392 .		
 \bibitem {R1} T, S. S. R. K. Rao, {\em Smooth points in spaces of operators}, Linear Algebra Appl. 517 (2017) 129--133.
 \bibitem {R2} T. S. S. R. K. Rao, {\em Subdifferential set of an operator}, Monatsh. Math. 199 (2022) 891-898.
 \bibitem {S} S. Singla, {\em Gateaux derivative of $C^\ast$ norm}, Linear Algebra and its Applications, 629 (2021) 208--218.	Zbl Review 07390140.
 \bibitem {S1} S. Singla, {\em  Birkhoff-James orthogonality and distance formulas in $C^\ast$-algebras and tuples of operators}, PhD thesis, Shiv Nadar University, March 2022.
 \bibitem {TW} K. F. Taylor and W. Werner, {\em Differentiability of the norm in $C^\ast$-algebras}, Bierstedt, Klaus D. (ed.) et al., Functional analysis. Proceedings of the Essen conference, held in Essen, Germany, November 24 - 30, 1991. New York, NY: Dekker (ISBN 0-8247-9066-9). Lect. Notes Pure Appl. Math. 150, 329--344 (1994).
 \bibitem {W} W. Werner, {\em Subdifferentiability and the noncommutative Banach-Stone theorem}, Jarosz, Krzysztof (ed.), Function spaces. The second conference. Proceedings of the conference held at Southern Illinois University at Edwardsville, IL, USA, May 24-28, 1994. New York, NY: Marcel Dekker. Lect. Notes Pure Appl. Math. 172, 377--386 (1995).
\bibitem{FS} F. Sullivan, {\em Geometric properties determined by the higher duals of a Banach space}, Illinois J. Math. 21 (1977) 315-331.
 \bibitem{Z} V. Zizler, {\em On some extremal problems in Banach spaces}, Math. Scand. 32 (1973) 214--224.							
\end{thebibliography}
\end{document}